\documentclass[12pt,reqno]{amsart}
\pdfoutput=1
\usepackage{xifthen}
\usepackage{tabularx}
\usepackage{adjustbox}
\usepackage{makecell}

\newtheorem*{remark*}{Remark}

\usepackage{subfig}
  \topmargin=0in
  \oddsidemargin=0in
  \evensidemargin=0in
  \textwidth=6.5in
  \textheight=8.5in


\newtheorem{theorem}{Theorem}[section]
\newtheorem{lemma}{Lemma}[section]

\raggedbottom

\linespread{1.2}

\usepackage{amsthm}

\def\Z{\mathbb Z}
\def\Q{\mathbb Q}

\def\C{\mathbb C}

\usepackage{amssymb}

\newcommand\tab[1][1cm]{\hspace*{#1}}

\usepackage{amsmath}
\usepackage{amssymb, latexsym, amsfonts, amscd, amsthm, mathrsfs, enumerate, esint}
\usepackage[usenames,dvipsnames]{color}
\usepackage[colorlinks=true,linkcolor=blue]{hyperref}
\usepackage{url}
\usepackage{bm}
\usepackage{stmaryrd}
\usepackage{hyperref}



%

%

\begin{document}

\title{An Analogue of Gauss Composition for Binary Cubic Forms}

\author{Benjamin Nativi}
\address{Department of Mathematics\\
Duke University\\
Durham, NC 27708}
\email{benjamin.nativi@duke.edu}

\begin{abstract}
Over 200 years ago, Gauss discovered a composition law on the $SL_2(\Z)$-equivalence classes of primitive binary quadratic forms. Since then, bijections of classes of binary forms have been found with ideal class groups of quadratic rings. This paper uses one such bijection given by Bhargava \cite{Bhargava1:2004}, relating classes of projective binary cubic forms to the $3$-torsion of an ideal class group, to find an explicit form for a cubic analogue of Gaussian composition.
\end{abstract}
\date{\today}
\maketitle

\section{Introduction}

In $\mathit{Disquisitiones}$ $\mathit{Arithmeticae}$, Gauss defines a fundamental law of composition on the set of binary quadratic forms, as discussed in \cite{Cohen:2000}. For two primitive binary quadratic forms $p_1$ and $p_2$ of the same discriminant $D$, their composition is any primitive binary quadratic form $P$ of discriminant $D$ such that
$$p_1(x_1,y_1)p_2(x_2,y_2) = P(X,Y),$$
where $X$ and $Y$ are integral linear combinations of the $x_1x_2$, $x_1y_2$, $y_1x_2$, and $y_1y_2$. Though the composition of two quadratic forms is not unique, the composition is well-defined on $GL_2(\Z)$-equivalence classes of quadratic forms, inducing a group structure \cite{Cox:2013}. 

Around 1838, Dirichlet discovered a useful consequence of this group structure when he found a bijection between the set of $GL_2(\Z)$-equivalence classes of binary quadratic forms and the set of ideal classes of quadratic orders. This bijection has been much discussed and extended, such as by Wood who gave a relationship between binary quadratic forms and modules for
quadratic algebras over any base ring \cite{Wood:2011}.

It seems natural to ask if we can find generalized composition laws for other sets of forms that induce group structures, and in 2004 Bhargava was able to find such composition laws for other sets of forms by relating such forms to ideal classes of quadratic orders through his study of $2\times 2 \times 2$ cubes of integers \cite{Bhargava1:2004}. Bhargava was able to further extend this work to derive composition laws from relations between sets of forms and ideal classes of cubic\cite{Bhargava2:2004}, quartic \cite{Bhargava3:2004}, and quintic orders  \cite{Bhargava4:2008}.

This paper presents a law of composition for binary \textit{cubic} forms analogous to Gauss's law of composition. As with Gauss's law of composition for binary quadratic forms, we prove that the composition of binary cubic forms is well-defined on $SL_2(\Z)$-equivalence classes.

To state the law of composition on binary cubic forms, first fix a nonzero and non-square integer $D$ congruent to 0 or 1 modulo 4. Define $\tau_D = \sqrt{D}/2$ and let 
$$
R_D = 
\begin{cases} 
\Z[\tau_D] \text{\tab\hspace{0.01in} if } D \equiv 0 \mod 4,\\ 
\Z[\frac{1}{2} + \tau_D] \text{\hspace{0.1in} if } D \equiv 1 \mod 4,
\end{cases}
$$
denote the ring of discriminant $D$ contained in the quadratic number field $\Q(\sqrt{D})$. We use the term binary cubic form to refer to a polynomial 
$$p(x,y) = a_0x^3 + 3a_1x^2y + 3a_2xy^2 + a_3y^3,$$
where $a_0, a_1, a_2, a_3 \in \Z$. Note such a polynomial could be referred to as an integral binary cubic form with triplicate central coefficients, but we shorten this to binary cubic form for conciseness. A binary cubic form is called projective if the Hessian of the cubic form is primitive, i.e. if 
$$\gcd(a_1^2-a_0a_2, a_0a_3-a_1a_2, a_2^2-a_1a_3) = 1.$$ 
We say a binary cubic form is of discriminant $D$ if 
$$D = a_0^2a_3^2 - 3a_1^2a_2^2 + 4a_1^3a_3 + 4a_0a_2^3 - 6a_0a_1a_2a_3.$$

Let 
$$\gamma = \begin{pmatrix}p & q \\ r & s\end{pmatrix} \in SL_2(\Z).$$
The group $SL_2(\Z)$ acts on the set of projective binary cubic forms by the right action
$$(f\cdot \gamma) (x,y) = f(px+qy, rx+sy) ,$$
where $f$ is a projective binary cubic form. This action preserves discriminant and gives rise to an equivalence relation on the set of projective binary cubic forms of discriminant $D$.

To define the cubic composition law, we need to consider a definition given by Bhargava as part of his bijection which we shall discuss in Section 2. Following Bhargava \cite{Bhargava1:2004}, given a projective binary cubic form $p$, define
$$p'(x,y) = c_0x^3 + 3c_1x^2y + 3c_2xy^2 + c_3y^3,$$
where
\begin{align*}
    c_0 &= \frac{1}{2}(2a_1^3-3a_0a_1a_2 + a_0^2a_3),\\
    c_1 &= \frac{1}{2}(a_1^2a_2-2a_0a_2^2 + a_0a_1a_3),\\
    c_2 &= -\frac{1}{2}(a_1a_2^2-2a_1^2a_3 + a_0a_2a_3),\\
    c_3 &= -\frac{1}{2}(2a_2^3-3a_1a_2a_3 + a_0a_3^2).
\end{align*}
Note that because all the $a_i$ are integers, $2p'$ is a binary cubic form. As shall be relevant later, there is a unique map (up to scalar multiple) 
$$S^3(S^3(V)) \to S^3(V),$$
which is $SL_2(\C)$-equivariant (where $S^3$ denotes the third symmetric power and $V = \C^2$ is the standard representation of $SL_2(\C)$). As a result of the work of Bhargava on page 238 of \cite{Bhargava1:2004}, $p\mapsto p'$ is such a map.

Now we can define our law of composition on the set of projective binary cubic forms. For two projective binary cubic forms $p_1$ and $p_2$ of discriminant $D$, define $p_1'$ and $p_2'$ as above. Define the composition of $p_1$ and $p_2$ to be any projective binary cubic form $P$ of discriminant $D$ such that 
$$P(X,Y) = p_1'(x_1,y_1)p_2(x_2,y_2) + p_1(x_1,y_1)p_2'(x_2,y_2),$$
where $X$ and $Y$ are integral linear combinations of $x_1x_2, x_1y_2, y_1x_2, y_1y_2$. Now we arrive at the main result:

\begin{theorem}
Fix $D$ a nonsquare and nonzero integer congruent to $0$ or $1$ modulo $4$. Given two projective binary cubic forms $p_1$ and $p_2$ of discriminant $D$, their composition $P$ exists and is uniquely determined up to $SL_2(\Z)$-equivalence. Further this law of composition induces a group structure on the set of $SL_2(\Z)$-equivalence classes of projective binary cubic forms in which all nontrivial elements have order $3$.
\end{theorem}

The organization of the paper is as follows. In Section 2, we will discuss Bhargava's bijection and its relation to our definition of cubic composition. We will then prove our main result, Theorem 1.1. In Section 3, we give two examples of the composition of binary cubic forms, each highlighting a different aspect of the group structure.

\newpage

\section{Cubic Gaussian Composition}

The proof of Theorem 1.1 will use an important connection between binary cubic forms and the ideal class group.  The following is a result of Bhargava \cite{Bhargava1:2004}:

\begin{theorem}
Fix $D$ a nonzero and non-square integer congruent to 0 or 1 modulo 4. Let $R_D$ be the quadratic ring of integers of discriminant $D$ and fix an orientation on $R_D$. There is a bijection between $SL_2(\Z)$-orbits of binary cubic forms $a_0x^3 + 3a_1x^2y + 3a_2xy^2 + a_3y^3$ of discriminant $D$ and the set of equivalence classes of pairs $(J,\delta)$ where $J$ is an ideal of $R_D$ and $\delta$ is an invertible element of $R_D\otimes \Q$ such that $J^3 \subset \delta \cdot R_D$ and $N(J)^3 = N(\delta)$.  
\end{theorem}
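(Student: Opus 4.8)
The plan is to exhibit the bijection by an explicit \emph{cubing} construction, then check that it is well defined, discriminant-preserving, and invertible. Fix the oriented $\Z$-basis $\{1,\tau_D\}$ of $R_D$, so that every element of $R_D$ has a well-defined $\tau_D$-coordinate and the chosen orientation pins down the sign of that coordinate. Given a pair $(J,\delta)$ with $J$ an ideal of $R_D$, $J^3\subset\delta R_D$, and $N(J)^3=N(\delta)$, choose a $\Z$-basis $\{\beta_1,\beta_2\}$ of $J$ compatible with the orientation. Since $\beta_i\beta_j\beta_k\in J^3\subset\delta R_D$, each product $\beta_i\beta_j\beta_k/\delta$ lies in $R_D$, so for the generic element $\beta=x\beta_1+y\beta_2$ the quantity $\beta^3/\delta$ lies in $R_D$ and expands as
$$\frac{\beta^3}{\delta}=G_0(x,y)+p(x,y)\,\tau_D,$$
with $G_0,p$ integral binary cubic forms. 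Because $(x\beta_1+y\beta_2)^3$ is expanded by the binomial theorem, the cross terms acquire the factors $\binom{3}{1}=\binom{3}{2}=3$, so the $\tau_D$-coordinate is automatically of the triplicated shape $p(x,y)=a_0x^3+3a_1x^2y+3a_2xy^2+a_3y^3$. I would define the form attached to $(J,\delta)$ to be this $p$.

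The next step is to verify the invariants of $p$. Computing $\operatorname{disc}(p)$ in terms of the $\tau_D$-coordinates of $\beta_1^3/\delta,\dots,\beta_2^3/\delta$ should, after using the hypothesis $N(J)^3=N(\delta)$, collapse to $\operatorname{disc}(R_D)=D$; the norm condition is precisely the normalization that forces the discriminant of the form to land on $D$ rather than on a square multiple of it. Projectivity of $p$, i.e.\ primitivity of its Hessian, should match $J$ being an invertible (proper) ideal, which I would confirm by identifying the Hessian covariant of $p$ with the norm form of $J$. Well-definedness on classes is then the easy part: replacing $(\beta_1,\beta_2)$ by $(\beta_1,\beta_2)\gamma$ for $\gamma\in SL_2(\Z)$ substitutes integral linear forms for $x,y$ and so acts on $p$ by exactly the stated $SL_2(\Z)$-action, while replacing $(J,\delta)$ by $(\kappa J,\kappa^3\delta)$ for $\kappa\in(R_D\otimes\Q)^\times$ leaves $\beta^3/\delta$ literally unchanged. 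Hence the construction descends to a map from equivalence classes of pairs to $SL_2(\Z)$-orbits of forms.

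The heart of the theorem is that this map is a bijection, and I expect the inverse to be the main obstacle. Neither injectivity nor surjectivity can be read off directly, because $p$ records only the $\tau_D$-coordinates of the four products $\beta_1^{3-i}\beta_2^{i}/\delta$, not the elements themselves, so the ratio $\theta=\beta_2/\beta_1\in R_D\otimes\Q$ that would reconstruct the lattice is not immediately visible. My plan is to recover it arithmetically: the conditions that $p$ have discriminant $D$ and (for injectivity) that $N(J)^3=N(\delta)$ ought to rigidify the hidden rational parts $G_0$, letting me solve for $\theta$ and for $\delta$ up to the scaling $\kappa$, and thereby reconstruct $(J,\delta)$ up to equivalence from $p$. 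For surjectivity I would instead build $(J,\delta)$ by hand: given $p$ of discriminant $D$, write an explicit $\Z$-lattice $J\subset R_D\otimes\Q$ and element $\delta$ in terms of $a_0,\dots,a_3$ and $\tau_D$, then verify $J^3\subset\delta R_D$, $N(J)^3=N(\delta)$, and that the cubing map returns $p$.

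Alternatively, one may realize $p$ as a symmetric $2\times2\times2$ Bhargava cube and invoke the parametrization of such cubes by balanced triples of ideals from the same work: symmetry collapses the triple $(I_1,I_2,I_3)$ to a single based ideal $(J,\delta)$, the Cube Law specializes to $3[J]=0$, and the diagonal $SL_2(\Z)$ on the symmetric cube restricts to the $SL_2(\Z)$-action on $p$. The delicate point on either route is the bookkeeping of the orientation and the factor-of-$2$ normalizations (the same ones appearing in $p\mapsto p'$), so that the correspondence is exactly $SL_2(\Z)$-equivariant and the discriminants agree on the nose rather than up to sign or square factors.
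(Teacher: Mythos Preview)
The paper does not prove this statement: Theorem~2.1 is quoted as ``a result of Bhargava'' and cited to \cite{Bhargava1:2004}, with the verification that the two maps are well-defined mutual inverses explicitly deferred to that reference. What the paper \emph{does} record is the pair of maps themselves. The map from pairs to forms is exactly your cubing construction: write $\delta^{-1}\alpha^{3-i}\beta^i=c_i+a_i\tau_D$ and take the $\tau_D$-coordinates $a_i$. In the other direction, given $p$, the paper (following Bhargava) sets $\alpha=c_1+a_1\tau_D$, $\beta=c_2+a_2\tau_D$, $J=\alpha\Z+\beta\Z$, and $\delta=\alpha\beta$, where $c_1,c_2$ are the two middle coefficients of the covariant $p'$ defined in the introduction.

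Your plan is a correct outline of how Bhargava's argument actually runs, and both routes you mention (the direct construction and the specialization of the $2\times2\times2$ cube parametrization to the symmetric case) are the ones used in \cite{Bhargava1:2004}. The one place your proposal is vaguer than what the paper supplies is the inverse: you propose to ``solve for $\theta$ and for $\delta$'' or to ``write an explicit $\Z$-lattice $J$ \dots\ in terms of $a_0,\dots,a_3$ and $\tau_D$,'' whereas the answer is already packaged in the covariant $p'$---the basis of $J$ is read off from the middle coefficients $c_1+a_1\tau_D$ and $c_2+a_2\tau_D$, and $\delta$ is their product. Since the paper itself offers no further argument, there is nothing else to compare; your sketch is sound, and the honest caveat you flag about orientation and factor-of-two bookkeeping is exactly where the work in Bhargava's proof lies.
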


Here two pairs $(J, \delta)$ and $(J', \delta')$ are equivalent if there is an automorphism $\phi$ of $R_D$ and an element $\kappa \in R_D\otimes \Q$ such that $J' = \kappa \phi(J)$ and $\delta' = \kappa^3\phi(\delta)$.

Because we consider the oriented quadratic ring of integers, the bijection in Theorem 2.1 restricts to a bijection between \emph{projective} binary cubic forms of discriminant $D$ and pairs $(J,\delta)$ where $J$ is an \emph{invertible} ideal of $R_D$ \cite{Bhargava1:2004}.

To give the explicit bijection from Theorem 2.1, we need to discuss the two maps given by Bhargava that give the bijection. For an ideal $J \subset R_D$, we say a basis $\alpha, \beta$ for $J$ is positively oriented if 
$$N(\alpha^*\beta) = \frac{\alpha^*\beta - \beta^*\alpha}{\sqrt{D}}>0,$$
where $\alpha^*$ denotes the conjugate of $\alpha$ in $R_D$. Suppose $J = \alpha \Z + \beta \Z$ for $\alpha,\beta\in R_D$ giving a positively oriented basis for $J$ and $\delta$ with the above properties. Then we can write
\begin{align*}
    \delta^{-1}\alpha^3      &= c_0 + a_0\tau_D,\\
    \delta^{-1}\alpha^2\beta &= c_1 + a_1\tau_D,\\
    \delta^{-1}\alpha\beta^2 &= c_2 + a_2\tau_D,\\
    \delta^{-1}\beta^3       &= c_3 + a_3\tau_D.
\end{align*}
for integers $a_i$ and $c_j$ because $J^3 \subset \delta \cdot R_D$.
The map from pairs to binary cubic forms is given by
$$(\alpha \Z + \beta \Z,\delta) \mapsto a_0x^3 + 3a_1x^2y+3a_2xy^2 + a_3y^3.$$
Conversely, given a binary cubic form $p(x,y) = a_0x^3 + 3a_1x^2y+3a_2xy^2 + a_3y^3$, define $c_1$ and $c_2$ as in the definition of $p'$. The map from binary cubic forms to pairs is given by
$$p(x,y) \mapsto (J,\delta),$$
where $J = \alpha \Z + \beta \Z$ for $\alpha = c_1+a_1\tau_D$, $\beta= c_2 + a_2\tau_D$ and $\delta = \alpha\beta$. Bhargava shows that these maps are well-defined and inverses on $SL_2(\Z)$-orbits of binary cubic forms and classes of pairs $(J,\delta)$ \cite{Bhargava1:2004}. 

Now because of this bijection, it is natural to define another cubic form
$$\tilde{p}(x,y) = p'(x,y) + p(x,y)\tau_D.$$ 
In fact, this cubic form inspired the cubic law of composition. In the quadratic case, the composition of two primitive binary quadratic forms $p_1$ and $p_2$ results from taking their product. For the cubic case, instead of taking the product of two projective binary cubic forms $p_1$ and $p_2$, we can take the product of $\tilde{p_1}$ and $\tilde{p_2}$ to get
\begin{align*}
    \tilde{p_1}(x_1,y_1)\tilde{p_2}(x_2,y_2) &= p_1'(x_1,y_1)p_2'(x_2,y_2) + p_1(x_1,y_1)p_2(x_2,y_2)\frac{D}{4}\\
    &+ \left[p_1'(x_1,y_1)p_2(x_2,y_2) + p_1(x_1,y_1)p_2'(x_2,y_2)\right]\tau_D.
\end{align*}
Then as $p_1$ and $p_2$ are the coefficients of $\tau_D$ for $\tilde{p_1}$ and $\tilde{p_2}$, it is natural to define the composition of $p_1$ and $p_2$ to be a projective binary cubic form $P$ such that 
$$\tilde{p_1}(x_1,y_1)\tilde{p_2}(x_2,y_2) = \tilde{P}(X,Y).$$

We will also need another characterization of $\tilde{p}$. For a projective binary cubic form $p$, define $\alpha$, $\beta$, and $\delta$ as in the bijection. Then
\begin{align*}
    \delta^{-1}(\alpha x + \beta y)^3 &= \delta^{-1}\alpha^3 x^3 + 3\delta^{-1}\alpha^2 \beta x^2y + 3\delta^{-1}\alpha \beta^2 xy^2 + \delta^{-1}\beta^3y^3\\
    &= (c_0+a_0\tau_D)x^3 + 3(c_1+a_1\tau_D)x^2y + 3(c_2+a_2\tau_D)xy^2 +(c_3+a_3\tau_D)y^3\\
    &= p'(x,y) + p(x,y)\tau_D = \tilde{p}(x,y).
\end{align*}
Thus we have two characterizations of $\tilde{p}(x,y)$, one in terms of the coefficients of the cubic form $p$ and the other in terms of $\delta$ and the generators of the corresponding ideal. Although we used the first characterization to define the composition on projective binary cubic forms, we will rely on the second characterization to prove that this composition is well defined. We will then prove that, on equivalence classes, this composition is equivalent to taking the class corresponding to the product $(J_1J_2, \delta_1\delta_2)$ of pairs $(J_1,\delta_1)$ and $(J_2,\delta_2)$. 

We will restate our main result here before proving it. Recall that we defined the composition of $p_1$ and $p_2$ to be any projective binary cubic form $P$ of discriminant $D$ such that 
$$P(X,Y) = p_1'(x_1,y_1)p_2(x_2,y_2) + p_1(x_1,y_1)p_2'(x_2,y_2),$$
where $X$ and $Y$ are bilinear combinations of $x_1x_2, x_1y_2, y_1x_2, y_1y_2$.

\begin{theorem}
Fix $D$ a nonsquare and nonzero integer congruent to $0$ or $1$ modulo $4$. Given two projective binary cubic forms $p_1$ and $p_2$ of discriminant $D$, their composition $P$ exists and is uniquely determined up to $SL_2(\Z)$-equivalence. Further this law of composition induces a group structure on the set of $SL_2(\Z)$-equivalence classes of projective binary cubic forms in which all nontrivial elements have order $3$.
\end{theorem}

\begin{proof}
First the existence of $P$ follows easily from Bhargava's bijection in Theorem 2.1. Say that $p_1$ corresponds to $(J_1,\delta_1)$ and $p_2$ corresponds to $(J_2,\delta_2)$ where $J_1 = u_1\Z + v_1\Z$ and $J_2 = u_2\Z + v_2\Z$ are given as positively oriented bases. We can compute the product
\begin{align*}
    \tilde{p}_1(x_1,y_1)\tilde{p}_2(x_2,y_2) &= \delta_1^{-1}(u_1x_1 + v_1y_1)^3\delta_2^{-1}(u_2x_2 + v_2y_2)^3\\
    & = (\delta_1\delta_2)^{-1}(u_1u_2x_1x_2 + u_1v_2x_1y_2 + v_1u_2y_1x_2 + v_1v_2y_1y_2)^3.
\end{align*}
Because $u_1u_2$, $u_1v_2$, $v_1u_2$, and $v_1v_2$ generate the product $J_1J_2$, they can each be written as linear integral combinations of a $\Z$-linearly independent positively oriented basis $\alpha$, $\beta$ for $J_1J_2$, i.e. we can write
\begin{align*}
    u_1u_2 &= m_1\alpha + n_1\beta,\\
    u_1v_2 &= m_2\alpha + n_2\beta,\\
    v_1u_2 &= m_3\alpha + n_3\beta,\\
    v_1v_2 &= m_4\alpha + n_4\beta,
\end{align*}
for integers $m_i$ and $n_j$. Then we can rewrite
$$(\delta_1\delta_2)^{-1}(u_1u_2x_1x_2 + u_1v_2x_1y_2 + v_1u_2y_1x_2 + v_1v_2y_1y_2)^3$$
$$=(\delta_1\delta_2)^{-1}((m_1x_1x_2 + m_2x_1y_2 + m_3y_1x_2 + m_4y_1y_2)\alpha + (n_1x_1x_2 + n_2x_1y_2 + n_3y_1x_2 + n_4y_1y_2)\beta)^3.$$
Choosing 
\begin{align*}
    X &= m_1x_1x_2 + m_2x_1y_2 + m_3y_1x_2 + m_4y_1y_2\\
    Y &= n_1x_1x_2 + n_2x_1y_2 + n_3y_1x_2 + n_4y_1y_2,
\end{align*} 
we can write 
$$\tilde{p_1}(x_1,y_1)\tilde{p_2}(x_2,y_2) = (\delta_1\delta_2)^{-1}(\alpha X + \beta Y)^3.$$
Under the map from pairs $(J,\delta)$ to cubic forms given above, we know that the RHS is equal to 
$$\tilde{P}(X,Y) = P'(X,Y) + P(X,Y)\tau_D,$$
where $P(X,Y)$ is a projective binary cubic form in $X$ and $Y$ of discriminant $D$. Because the coefficient of $\tau_D$ of the LHS is 
$$p_1'(x_1,y_1)p_2(x_2,y_2) + p_1(x_1,y_1)p_2'(x_2,y_2),$$
we have 
$$P(X,Y) = p_1'(x_1,y_1)p_2(x_2,y_2) + p_1(x_1,y_1)p_2'(x_2,y_2).$$
Thus for any $p_1$ and $p_2$ we can always find an appropriate cubic form and $X$ and $Y$ so that the law of composition is defined.

Now the main result of this paper is showing the uniqueness of $P$ up to $SL_2(\Z)$-equivalence. We will do so by showing that any $P$ that satisfies the properties of the theorem must correspond to a pair $(J,\delta)$ of the same class as $(J_1J_2, \delta_1\delta_2)$. As mentioned earlier, the relation between $p$ and $p'$ is important here--we need a lemma from Bhargava \cite{Bhargava1:2004}:

\begin{lemma}
Suppose $p$ is a projective binary cubic form of discriminant $D$ and $q$ is a (not necessarily integral) binary cubic form such that $2q$ is integral. If we can write
$$p(x,y)\tau_D + q(x,y) = \delta^{-1}(\alpha x + \beta y)^3,$$
where $\alpha, \beta \in R_D$ gives a positively oriented ordered basis for an ideal $J \subset R_D$ and $\delta \in R_D \otimes \Q$ is invertible such that $J^3 \subset \delta \cdot R_D$ and $N(J)^3 = N(\delta)$, then the cubic form $q$ is uniquely determined and is equal to $p'$ as defined above.  
\end{lemma}

Now let $p_1$ and $p_2$ be projective binary cubic forms of discriminant $D$ as above. Define $p_1'$, $p_2'$, $\tilde{p}_1$, $\tilde{p}_2$ as previously and suppose that 
$$P(X,Y) = p_1'(x_1,y_1)p_2(x_2,y_2) + p_1(x_1,y_1)p_2'(x_2,y_2),$$
where $P$ is a projective binary cubic form of discriminant $D$ and $X$ and $Y$ are integral linear combinations of $x_1x_2$, $x_1y_2$, $y_1x_2$, $y_1y_2$. Suppose that $p_1$ corresponds to the pair $(J_1, \delta_1)$, that $p_2$ corresponds to the pair $(J_2, \delta_2)$, and that $P$ corresponds to the pair $(I,\delta)$, where $J_1 = u_1\Z + v_1\Z$, $J_2 = u_2\Z + v_2\Z$, and $I = \alpha\Z + \beta\Z$ are all positively oriented bases. By the lemma, because
\begin{align*}
    \tilde{p_1}(x_1,y_1)\tilde{p_2}(x_2,y_2) &= p_1'(x_1,y_1)p_2'(x_2,y_2) + p_1(x_1,y_1)p_2(x_2,y_2)\frac{D}{4}\\
    &+ \left[p_1'(x_1,y_1)p_2(x_2,y_2) + p_1(x_1,y_1)p_2'(x_2,y_2)\right]\tau_D\\
    &= p_1'(x_1,y_1)p_2'(x_2,y_2) + p_1(x_1,y_1)p_2(x_2,y_2)\frac{D}{4} + P(X,Y)\tau_D,
\end{align*}
we have that
$$P'(X,Y) = p_1'(x_1,y_1)p_2'(x_2,y_2) + p_1(x_1,y_1)p_2(x_2,y_2)\frac{D}{4},$$
so
$$\tilde{p}_1(x_1,y_1)\tilde{p}_2(x_2,y_2) = \tilde{P}(X,Y).$$
Explicitly, we can write this as
$$(\delta_1\delta_2)^{-1}(u_1u_2x_1x_2 + u_1v_2x_1y_2 + v_1u_2y_1x_2 + v_1v_2y_1y_2)^3 = \delta^{-1}(\alpha X + \beta Y)^3.$$
We know that $X$ and $Y$ can be written in terms of their linear combinations of $x_1x_2$, $x_1y_2$, $y_1x_2$, and $y_1y_2$ so that we can write
$$\alpha X + \beta Y = n_0x_1x_2 + n_1x_1y_2 + n_2y_1x_2 + n_3y_1y_2,$$
for some $n_0,n_1,n_2,n_3 \in R_D$. Now suppose the coefficients of $x_1^3x_2^3$ on both sides of the equation are nonzero. Then we have
$$(\delta_1\delta_2)^{-1}(u_1u_2)^3 = \delta^{-1}n_0^3,$$
which implies that 
$$\frac{\delta_1\delta_2}{\delta} = \left(\frac{u_1u_2}{n_0}\right)^3.$$
Letting $k = u_1u_2/n_0$, we see that $\delta_1\delta_2/\delta = k^3$ is a cube in $\Q(\sqrt{D})$. Then we can compute
$$\delta^{-1}(\alpha X + \beta Y)^3 = \delta^{-1}k^{-3}k^3(\alpha X + \beta Y)^3 = (\delta_1\delta_2)^{-1}(k\alpha X + k\beta Y)^3.$$
This amounts to changing by equivalence the pair $(I,\delta)$ to $(kI, k^3\delta)$ (with $\phi$ the trivial automorphism). However if the coefficients of $x_1^3x_2^3$ are zero, we can instead do the above steps with either $x_1^3y_2^3$, $y_1^3x_2^3$, or $y_1^3y_2^3$ because the coefficients of at least one of these will be nonzero on both sides of the equation. 

Because $k^3\delta = \delta_1\delta_2$ we have
$$(u_1u_2x_1x_2 + u_1v_2x_1y_2 + v_1u_2y_1x_2 + v_1v_2y_1y_2)^3 = (kn_0x_1x_2 + kn_1x_1y_2 + kn_2y_1x_2 + kn_3y_1y_2)^3.$$
Again considering the coefficients of $x_1^3x_2^3$ on both sides, we now have $(u_1u_2)^3 = (kn_0)^3$. 

We must now separate our discussion into two cases. First if $\Q(\sqrt{D})$ does not contain the third roots of unity, then for $a,a'\in \Q(\sqrt{D})$ the equality $a^3 = (a')^3$ implies that $a = a'$. In this case
\begin{align*}
    u_1u_2 &= kn_0,\\
    u_1v_2 &= kn_1,\\
    v_1u_2 &= kn_2,\\
    v_1v_2 &= kn_3.
\end{align*}
Second, if $\Q(\sqrt{D})$ contains the third roots of unity, then $(u_1u_2)^3 = (kn_0)^3$ implies $u_1u_2 = \zeta k n_0$ for some third root of unity $\zeta$. Clearly 
$$(kn_0x_1x_2 + kn_1x_1y_2 + kn_2y_1x_2 + kn_3y_1y_2)^3 = (\zeta kn_0x_1x_2 + \zeta kn_1x_1y_2 + \zeta kn_2y_1x_2 + \zeta kn_3y_1y_2)^3,$$
which is the same as equivalence between $(kI, k^3\delta)$ and $(\zeta kI, k^3\delta)$. Let
\begin{align*}
    f &= u_1u_2x_1x_2 + u_1v_2x_1y_2 + v_1u_2y_1x_2 + v_1v_2y_1y_2,\\
    g &= \zeta kn_0x_1x_2 + \zeta kn_1x_1y_2 + \zeta kn_2y_1x_2 + \zeta kn_3y_1y_2.
\end{align*}
We know that $f^3 = g^3$ so we can multiply by $(u_1u_2)^{-3}$ to get $(f')^3 = (g')^3$ where
\begin{align*}
    f' &= x_1x_2 + bx_1y_2 + cy_1x_2 + dy_1y_2,\\
    g' &= x_1x_2 + b'x_1y_2 + c'y_1x_2 + d'y_1y_2,
\end{align*}
for some $b,b',c,c',d,d'$ in $\Q(\sqrt{D})$. The coefficients of $x_1^3x_2^2y_2$, $x_1^2y_1x_2^3$, and $x_1^2y_1x_2^2y_2$ for $(f')^3=(g')^3$ give 
$$3b = 3b', 3c = 3c', 3d = 3d'.$$
Thus $f' = g'$ so $f=g$. 

We have in general proven that 
$$u_1u_2x_1x_2 + u_1v_2x_1y_2 + v_1u_2y_1x_2 + v_1v_2y_1y_2 = \zeta k \alpha X + \zeta k \beta Y.$$
This shows us that the generators $u_1u_2, u_1v_2, v_1u_2, v_1v_2$ of $J_1J_2$ can be written as integral combinations of $\zeta k\alpha$ and $\zeta k \beta$, the generators of $\zeta k I$ as a $\Z$-module. Thus $J_1J_2 \subset \zeta k I$. But we also know that $(J_1J_1, \delta_1\delta_2)$ and $(\zeta k I, \delta_1\delta_2)$ have the property
$$N(J_1J_2)^3 = N(\delta_1\delta_2) = N(\zeta k I)^3.$$
Because the norm of an ideal is in $\Q$, this implies
$$N(J_1J_2) = N(\zeta k I).$$
Because one ideal contains the other and their norms are equal, we have
$$J_1J_2 = \zeta k I.$$
Thus any $P$ that satisfies the law of composition corresponds to a pair $(I,\delta)$ that is in the same equivalence class as $(J_1J_2,\delta_1\delta_2)$. 

We have shown that the law of composition for projective binary cubic forms is well-defined on $SL_2(\Z)$-equivalence classes! The induced group structure and the fact that all nontrivial elements have order $3$ results from the fact that we have shown that the composition of classes of binary cubic forms corresponds to the product of the corresponding pairs.
\end{proof}

\section{Example}

We finish by giving two examples of the composition of projective binary cubic forms. In the first example, $\Q(\sqrt{D})$ contains the third roots of unity but the ideal class group is nontrivial. In the second example, the ideal class group is trivial but there are still nontrivial classes of binary cubic forms corresponding to units in $R_D$ that are not cubes.

First, for $D = -31$, the quadratic number field $\Q(\sqrt{-31})$ contains the third roots of unity. Further, it is know that the ideal class group of $R_D$ has order $3$, thus under the given law of composition the group of $SL_2(\Z)$-equivalence classes of binary cubic forms is isomorphic to $\Z/3\Z$. Let 
\begin{align*}
    p_1(x_1,y_1) &= -x_1^3 - 3x_1^2y_1 + 3x_1y_1^2 + 4y_1^3,\\
    p_2(x_2,y_2) &= x_2^3 - 6x_2^2y_2 + y_2^3,\\
    P(X,Y)       &= 7X^3 + 3X^2Y - 3XY^2.
\end{align*}
All three of these binary cubic forms are projective, of discriminant $-31$ and represent an $SL_2(\Z)$-equivalence class corresponding to a different ideal class of the ideal class group ($P$ corresponds to the trivial class). We compute
\begin{align*}
    p'_1(x_1,y_1) &= -\frac{1}{2}x_1^3 + \frac{21}{2}x_1^2y_1 + \frac{39}{2}x_1y_1^2 + y_1^3,\\
    p'_2(x_2,y_2) &= -\frac{15}{2}x_2^3 - 3x_2^2y_2 + 12 x_2y_2^2 - \frac{1}{2}y_2^3.
\end{align*}
Then the composition is
$$P(X,Y) = p_1'(x_1,y_1)p_2(x_2,y_2) + p_1(x_1,y_1)p_2'(x_2,y_2),$$
where $X = x_1x_2 + y_1x_2 - y_1y_2$ and $Y = 2x_1y_2 + 4y_1x_2 + y_1y_2$. One can verify this by checking that both the LHS and RHS are equal to
\begin{align*}
    7 & x_1^3x_2^3    + 6x_1^3x_2^2y_2      - 12x_1^3x_2y_2^2           
  + 33x_1^2y_1x_2^3   - 54x_1^2y_1x_2^2y_2  - 36x_1^2y_1x_2y_2^2 + 12x_1^2y_1y_2^3\\
  - 3 & x_1y_1^2x_2^3 - 126x_1y_1^2x_2^2y_2 + 36x_1y_1^2x_2y_2^2 + 18x_1y_1^2y_2^3
  - 29y_1^3x_2^3      - 18y_1^3x_2^2y_2     + 48y_1^3x_2y_2^2    - y_1^3y_2^3.
\end{align*}

Second, for $D = 5$, the quadratic number field $\Q(\sqrt{5})$ does not contain the third roots of unity and has a trivial ideal class group. Then the group of $SL_2(\Z)$-equivalence classes of binary cubic forms under the given composition law is isomorphic to the group of units of the ring of integers of discriminant $5$ modulo the cubes of units, which is nontrivial. Let 
\begin{align*}
    p_1(x_1,y_1) &= -x_1^3 + 3x_1^2y_1 + y_1^3,\\
    p_2(x_2,y_2) &= -3x_2^3 + 6x_2^2y_2 - 3x_2y_2^2 + y_2^3,\\
    P(X,Y)       &= -8X^3 + 15X^2Y - 9XY^2 + 2Y^3.
\end{align*}
These binary cubic forms are projective, of discriminant $5$ and each represent a different $SL_2(\Z)$-equivalence class (again $P$ represents the trivial class). We compute
\begin{align*}
    p'_1(x_1,y_1) &= \frac{3}{2}x_1^3 - \frac{3}{2}x_1^2y_1 + 3x_1y_1^2 + \frac{1}{2}y_1^3,\\
    p'_2(x_2,y_2) &= \frac{7}{2}x_2^3 - 6x_2^2y_2 + \frac{9}{2}x_2y_2^2 - \frac{1}{2}y_2^3.
\end{align*}
Then the composition is
$$P(X,Y) = p_1'(x_1,y_1)p_2(x_2,y_2) + p_1(x_1,y_1)p_2'(x_2,y_2),$$
where $X = x_1x_2 + y_1y_2$ and $Y = x_1y_2 + y_1x_2 + y_1y_2$. One can verify this by checking that both the LHS and RHS are equal to
\begin{align*}
  - 8 & x_1^3x_2^3    + 15x_1^3x_2^2y_2     - 9x_1^3x_2y_2^2     + 2x_1^3y_2^3
  + 15x_1^2y_1x_2^3   - 27x_1^2y_1x_2^2y_2  + 18x_1^2y_1x_2y_2^2 - 3x_1^2y_1y_2^3\\
  - 9 & x_1y_1^2x_2^3 + 18x_1y_1^2x_2^2y_2  - 9x_1y_1^2x_2y_2^2  + 3x_1y_1^2y_2^3
  + 2 y_1^3x_2^3      - 3y_1^3x_2^2y_2      + 3y_1^3x_2y_2^2.
\end{align*}

\section{Acknowledgements}

The author would like to thank Dr. Aaron Pollack for the project proposal and for providing guidance throughout the project. The author would also like to thank Dr. David Kraines and the Duke University Mathematics Department for supporting the project through the Program for Research for Undergraduates (PRUV 2019).


\newpage
\bibliography{references}
\bibliographystyle{alpha}

\end{document}